\def\ilist{\renewcommand{\labelenumi}{(\roman{enumi})}}
\definecolor{cocol}{rgb}{0.7, 0, 0}
\definecolor{escol}{rgb}{0, 0.7, 0}
\renewcommand{\cases}[1]{\left\{ \begin{array}{rl} #1 \end{array} \right.}
\newcommand{\smfrac}[2]{{\textstyle \frac{#1}{#2}}}
\def\Xint#1{\mathchoice
{\XXint\displaystyle\textstyle{#1}}%
{\XXint\textstyle\scriptstyle{#1}}%
{\XXint\scriptstyle\scriptscriptstyle{#1}}%
{\XXint\scriptscriptstyle\scriptscriptstyle{#1}}%
\!\int}
\def\XXint#1#2#3{{\setbox0=\hbox{$#1{#2#3}{\int}$ }
\vcenter{\hbox{$#2#3$ }}\kern-.6\wd0}}
\def\mint{\Xint-}
\def\b{\big}
\def\B{\Big}
\def\bg{\bigg}
\def\sep{\,|\,}
\def\bsep{\,\b|\,}
\def\R{\mathbb{R}}
\def\N{\mathbb{N}}
\def\Z{\mathbb{Z}}
\def\dx{\,{\rm d}x}
\def\dz{\,{\rm d}z}
\def\dt{\,{\rm d}t}
\def\dd{\,{\rm d}}
\def\dk{\,{\rm d}k}
\def\<{\langle}
\def\>{\rangle}
\def\mA{{\sf A}}
\def\mC{{\sf C}}
\def\mG{{\sf G}}
\def\D{\partial}
\def\tC{\mathbb{C}}
\def\Div{{\rm div}\,}
\def\loc{{\rm loc}}
\def\DD{\mathscr{D}}
\def\DDe{\dot{\DD}}
\newcommand{\We}[1]{\dot{W}^{#1}}
\newcommand{\He}[1]{\dot{H}^{#1}}
\begin{document}

\title{A Note on Linear Elliptic Systems on $\R^d$}

\author{Christoph Ortner}
\address{Christoph Ortner\\ Mathematics Institute\\ Zeeman Building\\ 
  University of Warwick\\ Coventry CV4 7AL\\ UK}
\email{christoph.ortner@warwick.ac.uk}

\author{Endre S\"{u}li}
\address{Endre S\"{u}li\\ Mathematical Institute\\ University of Oxford\\
  24--29 St Giles'\\ Oxford OX1 3LB\\ UK}
\email{endre.suli@maths.ox.ac.uk}

\date{\today}



\keywords{elliptic boundary value problems, homogeneous Sobolev
  spaces, Beppo-Levi spaces}

\maketitle

\section{Introduction}
\label{sec:intro}
We are concerned with the well-posedness of linear elliptic systems of
the form
\begin{align}
  \label{eq:ellsys_strong}
  -\Div \tC : \D u =~& f, \\
  \label{eq:farfield_bc_formal}
  u(x) \sim~& 0, \quad \text{as } |x| \to \infty,
\end{align}
where $\tC \in C(\R^d; \R^{m^2d^2})$ is bounded
and satisfies the Legendre--Hadamard condition,
\begin{displaymath}
  \tC_{i\alpha}^{j\beta}(x) v_i v_j k_\alpha k_\beta \geq c_0 |v|^2
  |k|^2
  \qquad \forall x, k \in \R^d, \quad v \in \R^m.
\end{displaymath}
The functions $f, u : \R^d \to \R^m$ (we will define the precise
function spaces to which $f$ and $u$ belong to later on), and $\tC :
\mG = (\tC_{i\alpha}^{j\beta} \mG_{j\beta})_{i\alpha}$ denotes the
contraction operator.

The concrete problem of interest, for which we require this theory,
arises from the linearization of the equations of anisotropic finite
elasticity in infinite crystals, however, our results are more
generally applicable to translation-invariant problem posed on
$\R^d$. Some of the main challenges to be overcome in
translation-invariant problems on infinite domains are the absence of
Poincar\'e-type inequalities, and the interpretation of boundary
conditions.

A common approach to PDEs on infinite domain, as well as for exterior
problems, is the formulation in weighted function spaces (see, e.g.,
\cite{Kufner, WaJaBe:1985}).  Our aim in this note is to outline a
more straightforward existence, uniqueness, and regularity theory in
Sobolev spaces of Beppo Levi type (also called homogeneous Sobolev
spaces). Such spaces have previously been analyzed in detail in
\cite{DL} and used for the solution of elliptic PDEs (see, e.g.,
\cite{SamVar:2003, Sohr, GalSim:1990, KozSoh:1991}).

In the present work we describe a version of the homogeneous Sobolev
space approach. Variants (and sometimes generalisations) of most of
our results can be found in the cited literature; however, the
equivalence class viewpoint considered here is not normally taken and
the growth characterisation given in Theorem \ref{th:growth} appears
to be new. This research note is intended as an elementary
introduction to and reference for some key ideas.

We wish to define the homogeneous Sobolev space as a closure of smooth
functions with compact support. The following cautionary example was
discussed by Deny \& Lions \cite{DL}: let $u_n : \R \to \R$ be defined
by
\begin{displaymath}
  u_n(x) := n \max(0, 1 - |x| / n^3),
\end{displaymath}
where $u_n$ has compact support and $u_n'(x) = \pm \frac1{n^2}$ in
$\pm (0, n^3)$, and hence $\|\D u_n \|_{L^2} \to 0$ as $n \to
\infty$. However, $u_n$ clearly does not converge in the topology of
$\DD'$.

To avoid this difficulty, we will define spaces of equivalence
classes, or, factor spaces.  Indeed, if we shift $u_n$ to obtain $v_n
:= u_n - n$, then it is straightforward to see that $v_n \to 0$ in the
sense of distributions, which is consistent with the convergence $\|\D
u_n \|_{L^2} = \|\D v_n\|_{L^2} \to 0$ as $n\to \infty$.

\subsection{Notation}

$B_R$ denotes the open ball, centre $0$, radius $R$ in $\R^d$, $d \in \{1,2,\dots\}$; 
$p \in [1,\infty]$, $p'=p/(p-1)$, and $p^*$ denotes the Sobolev
conjugate of $p \in [1,d)$, $d>1$, defined by
$1/p^*= 1/p - 1/d$. For Lebesgue and Sobolev spaces of functions defined on the whole of $\R^d$ we shall suppress the symbol $\R^d$
in our notations for these function spaces, and will simply write $L^p$ and $W^{1,p}$, respectively, instead of
$L^p(\R^n)$ and $W^{1,p}(\R^n)$. We define the integral average $(u)_A$ of a locally integrable function $u \in L^1_{\rm loc}$
over a measurable set $A \subset \R^n$, $|A|:=\mbox{meas}(A)<\infty$, by  $(u)_A:=|A|^{-1} \int_A u(x) \dd x$. Throughout this
note $\int$ will signify $\int_{\mathbb{R}^d}$.

Assuming that $\Omega_k$, $k=1, 2, \dots$, in an increasing sequence of bounded open
sets in $\R^n$, $L^p_{\rm loc}(\Omega)$ is equipped with the family of seminorms
\[ \|u\|_{L^p(\Omega_k)} := \left(\int_{\Omega_k} |u(x)|^p \dd x\right)^{\frac{1}{p}}.\]
The linear space $L^p_{\rm loc}(\Omega)$ is then a Fr\'echet space (i.e., a metrizable
and complete topological vector space).

\section{Sobolev spaces of equivalence classes}

For any measurable function $u : \R^d \to \R$, let $[u] := \{ u + c
\sep c \in R \}$ denote the equivalence class of all translations of $u$. Let
$\DD$ denote the space of test functions ($C^\infty$ functions with
compact support in $\R^d$), and let $\DDe := \{ [u] \sep u \in \DD \}$ be the
associated linear space of equivalence classes $[u]$ of translations of $u \in \DD$.

We denote the linear space of equivalence classes $[u]$ of functions $u \in W^{1,p}_{\rm loc}$
with $p$-integrable gradient by
\begin{displaymath}
  \We{1,p} := \b\{ [u] \bsep u \in W^{1,p}_\loc, ~ \D u \in L^p \b\},
\end{displaymath}
equipped with the norm
\begin{displaymath}
  \| [u] \|_{\We{1,p}} := | u |_{W^{1,p}} = \| \D u \|_{L^p},\quad u \in [u].
\end{displaymath}

\begin{proposition}
  $\We{1,p}$ is a Banach space.
\end{proposition}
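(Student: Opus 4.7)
The plan is standard: verify that $\|\cdot\|_{\dot W^{1,p}}$ is actually a norm on $\dot W^{1,p}$, and then establish completeness by reducing to a Cauchy problem in $W^{1,p}_{\rm loc}$ via a judicious choice of representatives.

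First I would check that the expression $\|[u]\|_{\dot W^{1,p}} := \|\nabla u\|_{L^p}$ is well defined on equivalence classes: two representatives differ by a constant, which has zero weak gradient, so the value is independent of the choice of $u \in [u]$. Homogeneity and the triangle inequality follow from the corresponding properties of $\|\cdot\|_{L^p}$ applied to gradients. For definiteness, $\|[u]\|_{\dot W^{1,p}} = 0$ gives $\nabla u = 0$ a.e.\ in $\R^d$, and since $u \in W^{1,p}_\loc$ on the connected open set $\R^d$, this forces $u$ to equal a constant almost everywhere, so $[u] = [0]$.

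For completeness, let $\{[u_n]\}$ be a Cauchy sequence in $\dot W^{1,p}$. Then $\{\nabla u_n\}$ is Cauchy in $L^p(\R^d; \R^d)$, so there exists $g \in L^p(\R^d; \R^d)$ with $\nabla u_n \to g$ in $L^p$. The key step is to construct a representative $u \in W^{1,p}_\loc$ with $\nabla u = g$. I choose the representatives $\tilde u_n := u_n - (u_n)_{B_1}$, so that $(\tilde u_n)_{B_1} = 0$ for every $n$ and $[\tilde u_n] = [u_n]$. For any fixed $R \ge 1$, the Poincar\'e--Wirtinger inequality on the ball $B_R$ gives a constant $C_R > 0$ such that
\begin{displaymath}
  \| \tilde u_n - \tilde u_m \|_{L^p(B_R)}
  \le C_R \| \nabla \tilde u_n - \nabla \tilde u_m \|_{L^p(B_R)}
  + |B_R|^{1/p} \, \big| ( \tilde u_n - \tilde u_m )_{B_1} \big|,
\end{displaymath}
where the last term vanishes by our normalization (one applies Poincar\'e--Wirtinger first around the mean on $B_R$, then controls that mean in terms of the mean on $B_1$). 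Hence $\{\tilde u_n\}$ is Cauchy in $W^{1,p}(B_R)$, and by diagonalising over an exhaustion $B_{R_k} \uparrow \R^d$ we obtain a limit $u \in W^{1,p}_\loc$ with $\tilde u_n \to u$ in $W^{1,p}_\loc$.

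By uniqueness of weak derivatives, $\nabla u = g \in L^p$, so $[u] \in \dot W^{1,p}$, and
\begin{displaymath}
  \| [u_n] - [u] \|_{\dot W^{1,p}} = \| \nabla \tilde u_n - \nabla u \|_{L^p} = \| \nabla u_n - g \|_{L^p} \to 0,
\end{displaymath}
which completes the proof. The main obstacle is the completeness step, and in particular the need to pass from a single $L^p$ gradient limit to an actual $W^{1,p}_\loc$ function; the cautionary example of Deny--Lions quoted in the introduction shows why this is delicate without fixing the constants, and the normalization by $(u_n)_{B_1}$ together with Poincar\'e--Wirtinger is precisely what defeats the ``shifting to infinity'' pathology.
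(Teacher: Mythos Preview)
Your proof is correct and follows essentially the same route as the paper's: normalize representatives by subtracting the mean on $B_1$, use Poincar\'e to obtain $L^p_{\rm loc}$ convergence, and identify the gradient of the limit with the $L^p$ limit of the gradients. The paper compresses the Poincar\'e/exhaustion step into a single ``it is straightforward to show'' clause, whereas you spell it out, but the underlying argument is the same.
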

\begin{proof}
  It is clear that $\|\bullet\|_{\We{1,p}}$ is a semi-norm on $\We{1,p}$. To show
  that it is a norm, suppose that $\|[u]\|_{\We{1,p}} = 0$. Then $\D u
  = 0$ and hence $u$ is a constant, that is, $[u] = [0]$.

  To prove that $\We{1,p}$ is complete, suppose that $([u_j])_{j \in
    \N}$ is a Cauchy sequence. Let $u_j \in [u_j]$ be defined through
  the condition that $(u_j)_{B_1} = 0$. Then, it is straightforward to
  show that there exist $u \in L^p_\loc$ and $g \in L^p$ such that
  $u_j \to u$ in $L^p_\loc$ and $\D u_j \to g$ in $L^p$. By the uniqueness of
  the distributional limit, it then follows that $g = \D u$. Hence we have shown that there exists $u \in
  W^{1,p}_\loc$ such that $\D u_j \to \D u$ in $L^p$, that is, $[u_j]
  \to [u]$ in $\We{1,p}$ as $j \to \infty$.
\end{proof}

The next result establishes that test functions are dense in
$\We{1,p}$. This result is a special case of \cite[Thm. 1]{Sohr}.

\begin{theorem}
  Let $p \in (1, \infty)$ or $p = 1$ and $d > 1$; then, $\DDe$ is
  dense in $\We{1,p}$. 
\end{theorem}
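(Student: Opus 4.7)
The plan is a standard two-stage approximation adapted to the quotient setting: first mollify a chosen representative to obtain a smooth one, then truncate to compact support while exploiting the freedom to shift by a constant. Given $[u] \in \We{1,p}$, pick any representative $u \in W^{1,p}_{\rm loc}$ with $\D u \in L^p$ and set $u_\varepsilon := u * \rho_\varepsilon$ for a standard mollifier $\rho_\varepsilon$. Since $\D u_\varepsilon = (\D u) * \rho_\varepsilon$ and $\D u \in L^p$ with $p < \infty$, we have $\D u_\varepsilon \to \D u$ in $L^p$, i.e.\ $[u_\varepsilon] \to [u]$ in $\We{1,p}$. It therefore suffices to approximate an arbitrary smooth $u$ with $\D u \in L^p$ by elements of $\DDe$.

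For the truncation, fix $\eta_R \in \DD$ with $\eta_R \equiv 1$ on $B_R$, $\mathrm{supp}\,\eta_R \subset B_{2R}$ and $|\D \eta_R| \leq C/R$, and denote the annulus $A_R := B_{2R}\setminus B_R$. The key point is not to cut off $u$ itself but the shifted representative $u - c_R$, where $c_R := (u)_{A_R}$ is the annular average. Setting $\varphi_R := \eta_R(u - c_R) \in \DD$, we then have $[\varphi_R] \in \DDe$ and
\begin{displaymath}
  \D \varphi_R - \D u \,=\, (\eta_R - 1)\, \D u \,+\, (u - c_R)\, \D \eta_R.
\end{displaymath}
The first term converges to zero in $L^p$ by dominated convergence, since $\D u \in L^p$ and $\eta_R \to 1$ pointwise.

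The second term is controlled by invoking the Poincar\'e--Wirtinger inequality on $A_R$; by scaling from the unit annulus,
\begin{displaymath}
  \|u - c_R\|_{L^p(A_R)} \,\leq\, C R\, \| \D u \|_{L^p(A_R)},
\end{displaymath}
so that
\begin{displaymath}
  \|(u - c_R) \D \eta_R\|_{L^p} \,\leq\, \tfrac{C}{R}\, \|u - c_R\|_{L^p(A_R)} \,\leq\, C'\, \|\D u\|_{L^p(A_R)} \;\longrightarrow\; 0
\end{displaymath}
as $R \to \infty$, because $\D u \in L^p(\R^d)$ and $|A_R| \to \infty$ does not harm the tail bound. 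Thus $[\varphi_R] \to [u]$ in $\We{1,p}$.

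The main obstacle is conceptual rather than technical: a representative $u$ need not lie in $L^p$, so a naive cutoff $\eta_R u$ leaves an uncontrollable error $u\, \D \eta_R$. The quotient-space viewpoint is exactly what rescues the argument, since it lets us redefine the cutoff around the annular average $c_R$; the scaling $R$ in Poincar\'e--Wirtinger then exactly compensates the factor $1/R$ from $\D \eta_R$, reducing everything to the trivial tail estimate $\|\D u\|_{L^p(A_R)} \to 0$. The hypothesis $p \in (1,\infty)$, or $p = 1$ with $d > 1$, enters through the validity of Poincar\'e--Wirtinger on $A_R$ (in the borderline case $p = 1$, $d > 1$, the Sobolev--Poincar\'e variant $\|u - c_R\|_{L^{d/(d-1)}(A_R)} \leq C R \|\D u\|_{L^1(A_R)}$ combined with H\"older on the annulus suffices), and through the convergence of mollification in $L^p$. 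The excluded case $p = 1$, $d = 1$ fails since $\int_\R \varphi' = 0$ for every $\varphi \in \DD$, obstructing approximation of any $u$ with $u(+\infty) \neq u(-\infty)$.
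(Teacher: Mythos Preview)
For $d > 1$ your argument is essentially the paper's: shift by the annular mean, cut off, and use the scaled Poincar\'e--Wirtinger inequality on $A_R$ to absorb the factor $1/R$ coming from $\D\eta_R$. (The paper cites density of $\DD$ in $W^{1,p}$ rather than mollifying first, but this is cosmetic.)

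The gap is the case $d = 1$, $p \in (1,\infty)$, which the theorem does cover but your proof does not. In one dimension the annulus $A_R = (-2R,-R)\cup(R,2R)$ is disconnected, and Poincar\'e--Wirtinger with a \emph{single} mean $c_R = (u)_{A_R}$ is simply false: take any smooth $u$ with $u \equiv 1$ on $(1,\infty)$ and $u \equiv -1$ on $(-\infty,-1)$; then $\D u = 0$ on $A_R$ while $\|u - c_R\|_{L^p(A_R)} \sim R^{1/p}$. Your reading of where the hypothesis enters is therefore inverted: it is not $p = 1$ that threatens Poincar\'e--Wirtinger (the $L^1$ version holds on any bounded connected Lipschitz domain, so the Sobolev--Poincar\'e detour you propose for $p=1$, $d>1$ is unnecessary), but $d = 1$ that destroys the connectedness of $A_R$. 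The paper handles $d=1$ by an entirely separate construction: it reduces to approximating $u' = \chi_{(a,b)}$ and explicitly builds compactly supported primitives with derivative $\chi_{(a,b)} - \tfrac{1}{n}\chi_{(b,\,b+n(b-a))}$, which converges to $\chi_{(a,b)}$ in $L^p$ precisely when $p > 1$. You still need some device of this kind to close the $d=1$ case.
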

\begin{proof}
  Suppose first that $d > 1$. Fix $u \in [u] \in \We{1,p}$. Since
  $\DD$ is dense in $W^{1,p}$ it is sufficient to show the existence
  of a sequence $(u_n) \subset W^{1,p}$ such that $[u_n] \to [u]$ in
  $\We{1,p}$.

  Let $\eta \in C^1([0, \infty))$ be a cut-off function satisfying
  \begin{displaymath}
    \eta(r) = \cases{1, & r \leq 1, \\ 0, & r \geq 2.}
  \end{displaymath}
  For each $n \in \N$, let $A_n := B_{2n} \setminus B_n$ and define
  \begin{displaymath}
    u_n(x) := \eta(|x|/n)\, \b(u(x) - (u)_{A_n}\b).
  \end{displaymath}
  Hence,
  \begin{displaymath}
    \D u_n(x) = n^{-1}\, \eta'(|x|/n)\, \smfrac{x}{|x|}\, \b(u - (u)_{A_n}\b) +
    \eta(|x|/n)\, \D u.
  \end{displaymath}
  Since $u \in W^{1,p}_\loc$ and $u_n$ has compact support, it is
  clear that $u_n \in W^{1,p}$. Further, since $\eta'$ is uniformly bounded, we can estimate
  \begin{align*}
    \| \D u - \D u_n \|_{L^p}
    \leq~& \b\| n^{-1} \eta' (u - (u)_{A_n}) \b\|_{L^p}
    + \b\| (1 - \eta) \D u \|_{L^p} \\
    \leq~& C n^{-1} \| u - (u)_{A_n} \|_{L^p(A_n)}
    + \| \D u \|_{L^p(\R^d \setminus B_n)}.
  \end{align*}
  Poincar\'{e}'s inequality on $A_1$ and a standard scaling argument
  then imply that
  \begin{displaymath}
    C n^{-1} \| u - (u)_{A_n} \|_{L^p(A_n)} \leq (C n^{-1}) (C_P n) \|
    \D u \|_{L^p(A_n)} \leq C \| \D u \|_{L^p(\R^d \setminus B_n)},
  \end{displaymath}
  that is, $\| \D u - \D u_n \|_{L^p} \leq C \| \D u \|_{L^p(\R^d
    \setminus B_n)}$. Since $\|\D u \|_{L^p}$ is finite it follows
  that this upper bound tends to zero as $n \to \infty$.

  Hence, we have constructed a sequence $(u_n) \subset W^{1,p}$ such
  that $\D u_n \to \D u$ in $L^p$, or, equivalently $[u_n] \to [u]$ in
  $\We{1,p}$.

  If $d = 1$, then $A_n$ is not simply connected and hence the
  Poincar\'e inequality does not hold. Instead, we prove that for any
  $u \in W^{1,p}_{\rm loc}$ with $u' = \chi_{(a, b)}$ (the
  characteristic function of an interval) we can construct a sequence
  $[u_n] \in \DDe$ approching $[u]$. Density of the span of
  characteristic functions in $L^p$ then implies the stated result for
  $d = 1$. Let $u_n$ be defined by
  \begin{displaymath}
    u_n'(x) = \cases{ 1, & x \in (a, b), \\
      -1/n , & x \in (b, b+ n(b-a)), \\
      0, & \text{otherwise},}
  \end{displaymath}
  then it is a straightforward computation to show that $u_n' \to u' =
  \chi_{a,b}$ in $L^p$ for any $p > 1$, but not in $L^1$.
\end{proof}

\begin{remark}
  If $d = 1$ then $\DDe$ is not dense in $\We{1,1}$. If this were the
  case, then all functions $u \in \We{1,1}$ would satisfy $\int_\R u'
  \dx = 0$. However, it is clear that the equivalence class of the
  function $u(x) = \max(0, \min(x, 1))$ belongs to $\We{1,1}$, but
  does not satisfy this condition.
\end{remark}

Our next result classifies the growth or decay of classes $[u] \in
\We{1,p}$ at infinity. Case (i) is essentially contained in
\cite[Prop. 2.4(i)]{KozSoh:1991}; cases (ii) and (iii) are new to the
best of our knowledge.

\begin{theorem}
  \label{th:growth}
  There exist linear maps $J_\infty : \We{1,p} \to C^\infty$ and $J_0
  : \We{1,p} \to W^{1,p}$ such that
  \[[u] = [J_\infty[u] + J_0[u]], \qquad \text{for } [u] \in \We{1,p},\]
  and
  \begin{align*}
    \| \D J_\infty [u] \|_{L^p} \leq \|\D u \|_{L^p},
    \quad \| \D J_\infty[u] \|_{L^\infty} \leq  \| \D u \|_{L^p} \quad
    \text{and}
    \quad \| J_0[u] \|_{W^{1,p}} \leq C \| \D u \|_{L^p},
  \end{align*}
  where $C = C(d) > 0$.

  Moreover, $J_\infty$ may be chosen to satisfy the following growth
  conditions at infinity:
  \begin{enumerate} \ilist
  \item If $p < d$, then $\We{1,p}$ is continuously embedded in
    $L^{p*}$, in the sense  that, for each $[u] \in \We{1,p}$ there exists a unique $u_0 \in
  [u]$ such that $u_0 \in L^{p*}$ and $\| u_0 \|_{L^{p*}} \leq C \| \D u_0 \|_{L^p}$, where
  $C$ is a positive constant independent of $u_0$.
     In particular, $J_\infty[u](x) \to 0$ as $|x| \to \infty$, $x \in \R^d$.
    \medskip
  \item If $p > d$, then $|J_\infty[u](x)| \leq C \| [u] \|_{\We{1,p}}
    |x|^{1/p'}$, $x \in \R^d$.
    \medskip
  \item If $p = d$, then $|J_\infty[u](x)| \leq C \|[u] \|_{\We{1,p}}
    \log (2+ |x|)$, $x \in \R^d$.
 \end{enumerate}
\end{theorem}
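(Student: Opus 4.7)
\medskip

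\emph{Proof plan.}
The plan is to define $J_\infty[u]$ as the convolution of a case-specific linear selection $v \in [u]$ with a fixed mollifier, and to set $J_0[u] := v - v \ast \rho$, so that $[J_\infty[u] + J_0[u]] = [v] = [u]$ automatically. Fix a nonnegative $\rho \in \DD$ with $\mathrm{supp}\,\rho \subset B_1$ and $\int \rho = 1$. Since $\D J_\infty[u] = (\D v) \ast \rho$ depends only on $[u]$, Young's inequality for convolutions yields $\|\D J_\infty[u]\|_{L^p} \leq \|\D u\|_{L^p}$ and $\|\D J_\infty[u]\|_{L^\infty} \leq C \|\D u\|_{L^p}$ with $C = \|\rho\|_{L^{p'}}$. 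The $L^p$-bound on $J_0[u]$ is obtained from the fundamental theorem of calculus identity
\[
v(x) - (v \ast \rho)(x) = \int \rho(y) \int_0^1 \D v(x - sy) \cdot y \,\dd s \,\dd y
\]
and Minkowski's integral inequality in $L^p(\R^d)$, giving $\|J_0[u]\|_{L^p} \leq \big(\int |y| \rho(y)\,\dd y\big) \|\D u\|_{L^p} \leq \|\D u\|_{L^p}$; combined with the trivial bound $\|\D J_0[u]\|_{L^p} \leq 2 \|\D u\|_{L^p}$ this yields the required estimate for $J_0[u] \in W^{1,p}$. In each case below $v$ is linear in $[u]$, so $J_\infty$ and $J_0$ are linear maps.

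For case (i), $p < d$: the density of $\DDe$ in $\We{1,p}$ established in the previous theorem, together with the Gagliardo--Nirenberg--Sobolev inequality $\|\varphi\|_{L^{p^*}} \leq C \|\D \varphi\|_{L^p}$ valid on $\DD$, shows that any $\DDe$-approximating Cauchy sequence is Cauchy in $L^{p^*}$ and yields an $L^{p^*}$-representative $u_0 \in [u]$ with $\|u_0\|_{L^{p^*}} \leq C \|\D u\|_{L^p}$; uniqueness of $u_0$ follows because the only constant in $L^{p^*}(\R^d)$ is $0$. Taking $v := u_0$, the function $J_\infty[u] = u_0 \ast \rho$ is continuous, lies in $L^{p^*}$, and tends to zero at infinity since $\|u_0\|_{L^{p^*}(B_1(x))} \to 0$ as $|x| \to \infty$ by dominated convergence. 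For case (ii), $p > d$: I take $v$ to be the unique representative of $[u]$ with $(v \ast \rho)(0) = 0$, obtained from any $v_0 \in [u]$ by the linear shift $v := v_0 - (v_0 \ast \rho)(0)$; then $J_\infty[u](0) = 0$. For $|x| \leq 1$, the Lipschitz bound gives $|J_\infty[u](x)| \leq \|\D J_\infty[u]\|_{L^\infty} |x| \leq C\|\D u\|_{L^p} |x|^{1/p'}$ (using $|x| \leq |x|^{1/p'}$ for $|x| \leq 1$ and $1/p' \leq 1$); for $|x| \geq 1$, Morrey's inequality applied to the smooth $J_\infty[u]$ yields $|J_\infty[u](x)| \leq C |x|^{1 - d/p}\|\D u\|_{L^p} \leq C|x|^{1/p'}\|\D u\|_{L^p}$, since $1 - d/p \leq 1/p'$.

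Case (iii), $p = d$, is the main obstacle because Morrey just fails at the critical exponent; here I use a dyadic $BMO$-style argument. Take $v := u - (u)_{B_1}$. The scale-invariance of Poincar\'e's inequality at exponent $d$ produces, for each $k \geq 0$,
\[
\big|(v)_{B_{2^{k+1}}} - (v)_{B_{2^k}}\big| \leq C \|\D u\|_{L^d(B_{2^{k+1}})} \leq C \|\D u\|_{L^d},
\]
and summing over $k$ up to order $\log R$ yields $|(v)_{B_R}| \leq C \log(1 + R) \|\D u\|_{L^d}$. Comparing the averages on $B_{2|x|}(x)$ and $B_{2|x|}(0)$ through the common enclosing ball $B_{3|x|}(0)$, and iterating the same dyadic estimate around $x$, one extends this to $|(v)_{B_1(x)}| \leq C \log(2 + |x|) \|\D u\|_{L^d}$. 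Finally, H\"older's inequality and Poincar\'e on $B_1(x)$ give
\[
|J_\infty[u](x) - (v)_{B_1(x)}| \leq \|\rho - 1/|B_1|\|_{L^{d'}(B_1)} \|v - (v)_{B_1(x)}\|_{L^d(B_1(x))} \leq C \|\D u\|_{L^d},
\]
so that $|J_\infty[u](x)| \leq C \|[u]\|_{\We{1,d}} \log(2 + |x|)$, completing the proof.
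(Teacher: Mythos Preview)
Your proof is correct and follows the same overall architecture as the paper's: split a representative into a mollified part $J_\infty$ and a remainder $J_0$, then treat the three regimes via the Gagliardo--Nirenberg--Sobolev embedding, a pointwise oscillation estimate, and a BMO-type telescoping argument respectively. The execution differs in a few places worth noting. For $\|J_0[u]\|_{L^p}$ you use the identity $v(x)-(v\ast\rho)(x)=\int\rho(y)\int_0^1 \D v(x-sy)\cdot y\,\dd s\,\dd y$ together with Minkowski's integral inequality; the paper instead bounds $\|w\|_{L^p(B_R(\xi))}$ by Poincar\'e on $B_{2R}(\xi)$ and sums over a finite-overlap covering of $\R^d$. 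Your route is shorter and makes the $p$-independence of the constant transparent. For case~(ii) you invoke Morrey's inequality directly (yielding the sharper exponent $1-d/p$, which you then dominate by $1/p'$ for $|x|\geq 1$), whereas the paper carries out an explicit line integral of the mollified gradient along the segment from $0$ to $x$ and applies H\"older, obtaining $|x|^{1/p'}$ directly. For case~(iii) your dyadic-ball telescope centred first at $0$ and then at $x$ is exactly the same John--Nirenberg-type mechanism as the paper's chain of cubes linking $0$ to $x$; only the geometric packaging differs.

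One small discrepancy: the statement asserts $\|\D J_\infty[u]\|_{L^\infty}\leq\|\D u\|_{L^p}$ with no constant, while your H\"older step gives the factor $\|\rho\|_{L^{p'}}$, which depends on $p$. The paper obtains the constant-free bound by also requiring $0\leq\eta\leq 1$ and applying Jensen's inequality with respect to the probability measure $\eta(x-\cdot)\,\dd z$; adding the hypothesis $\rho\leq 1$ to your setup recovers this.
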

\begin{proof}
  We shall assume throughout that $1\leq p < \infty$; in case (ii) the
  choice of $p = \infty$ can be dealt with separately using an
  analogous argument to the one for $d<p<\infty$.

  Let $\eta \in \DD$, $0 \leq \eta \leq 1$, $\int \eta(x) \dx = 1$, fix
  $u \in [u] \in \We{1,p}$ and define
  \begin{displaymath}
    v := \eta \ast u  \in C^\infty, \quad \text{and} \quad
    w := u - v.
  \end{displaymath}
  By Young's inequality for convolutions, $\| \D v \|_{L^p} \leq \| \D
  u \|_{L^p}$, and, because of the assumption that $\eta \leq 1$, it
  is also straightforward to show that $\| \D v \|_{L^\infty} \leq \|
  \D u \|_{L^p}$:
  \begin{align*}
    \b| \D v(x) \b| = \bg| \int \eta(x - z) \D u(z) \dz \bg| \leq
    \bg( \int \eta(x-z) |\D u(z)|^p \dz \bg)^{1/p}
    \leq \| \D u \|_{L^p}.
  \end{align*}

  Next, we show that $w \in W^{1,p}$. It follows directly from the
  definition of $w$ that $\D w = \D u - \D v \in L^p$. Hence, $\| \D w
  \|_{L^p} \leq \| \D u \|_{L^p} + \| \D v \|_{L^p} \leq 2 \| \D u
  \|_{L^p}$. To show that $w \in L^p$, let $R > 0$ be such that ${\rm
    supp}\,\eta \subset B_R$. For any $\xi \in \R^d$ we have
  \begin{align*}
    \int_{B_R(\xi)} |w(x)|^p \dx =~& \int_{B_R(\xi)} \bg| \int \eta(x-z) (u(z) - u(x)) \dz
    \bg|^p \dx \\
    \leq~& \int_{B_R(\xi)} \int \eta(x-z) \b| u(z) - u(x) \b|^p \dz
    \dx \\
    \leq~& \int_{B_{2R}(\xi)} \int_{B_{2R}(\xi)} \b| u(z) - u(x) \b|^p
    \dz \dx \leq C(R) \| \D u \|_{L^p(B_{2R}(\xi))}^p,
  \end{align*}
  where the last inequality is an immediate consequence of
  Poincar\'{e}'s inequality on the ball $B_{2R}(\xi)$. We can cover
  $\R^d$ with countably many balls $B_R(\xi)$, $\xi \in R \Z^d$, such
  that the balls $B_{2R}(\xi)$ have finite overlap, that is, any $x
  \in \R^d$ belongs to at most $m$ balls where $m$ is independent of
  $x$. Summing over all balls gives the result that $\| w \|_{L^p}
  \leq C \| \D u \|_{L^p}$, where $C$ may depend on the support of
  $\eta$ and hence on the dimension $d$, but is independent of the value of $p$.

\smallskip

We now distinguish between three cases, depending on the values of $p$
and $d$. 

{\it (i) $p < d$: } Since $\DDe$ is dense in $\We{1,p}$ and by the
Gagliardo--Nirenberg--Sobolev Inequality, it follows that $\We{1,p}$
is embedded in $L^{p*}$ in the sense that for each $[u] \in \We{1,p}$
there exists a unique $u_0 \in [u]$ such that $u_0 \in L^{p*}$ and $\|
u_0 \|_{L^{p*}} \leq C_{\rm GNS} \| \D u_0 \|_{L^p}$, where $C_{\rm
  GNS}$ is the constant in the Gagliardo--Nirenberg--Sobolev
Inequality. We define
\[J_\infty[u] := v_0 := \eta \ast u_0.\]
It is an immediate consequence of this definition that $v_0 \in
L^{p*}$.  Since $\| \D v_0 \|_{L^\infty}$ is finite, it follows also
that the sequence $( \| v_0 \|_{L^{\infty}(B_1(\xi))})_{\xi \in \Z^d}$
belongs to $\ell^{p*}(\Z^d)$, and this implies that $\| v_0
\|_{L^{\infty}(B_1(\xi))} \to 0$ uniformly as $|\xi| \to \infty$.  We
obtain statement (i) as a special case.

{\it (ii) $p > d$: } In this case we define $J_\infty[u](x) := (\eta
\ast u)(x) - (\eta\ast u)(0)$ for any element $u \in [u]$, which does
not of course change the foregoing results. If we define $v := \eta
\ast u$, then we obtain
\begin{align*}
  |J_\infty[u](r)| =~& |v(r) - v(0)| \leq \bg|\int_0^{|r|} \D v\b(t
  \smfrac{r}{|r|}\b) \smfrac{r}{|r|} \dt \bg| \\
  \leq~& |r|^{1/p'} \bg(\int_{0}^{|r|} \b| \D v\b(t \smfrac{r}{|r|}
  \b)\b|^p \dt\bg)^{1/p} \\
  \leq~& |r|^{1/p'} \bg( \int_0^{|r|} \bg| \int \eta\b(t
  \smfrac{r}{|r|} - z\b) \D u(z) \dz\bg|^p  \dt \bg)^{1/p} \\
  \leq~& |r|^{1/p'} \bg( \int \bg[ \int_0^{|r|} \eta\b(t
  \smfrac{r}{|r|} - z\b) \dt \bg] |\D u(z)|^p \dz \bg)^{1/p}.
\end{align*}
Since the diameter of the support of $\eta$ is independent of $|r|$ it
follows that
\begin{displaymath}
  \int_0^{|r|} \eta\b(t \smfrac{r}{|r|} - z\b) \dt
  \leq C
\end{displaymath}
for some universal constant $C$, which implies {\it (ii)}.

{\it (iii) $p = d$: } In this critical case, we use the fact that
$\We{1,p}$ may be embedded in the space BMO of functions of bounded
mean oscillation (see \cite{JN}), though for simplicity we will not
refer to BMO directly.

If $Q$ is a cube in $\R^d$ with arbitrary orientation and $u \in [u]
\in \We{1,p}$, then
\begin{displaymath}
  \mint_Q \b| u - (u)_Q \b| \dx \leq |Q|^{1/p'-1} \| u - (u)_Q
  \|_{L^p(Q)} \leq C \| \D u \|_{L^p(Q)} \leq C \|[u]\|_{\We{1,p}},
\end{displaymath}
where the second inequality follows on noting that $|Q|^{1/p'-1} =
|Q|^{-1/d} \leq ({\rm diam} Q)^{-1}$.

The key observation is that if $u \in W^{1,p}_\loc$ with $\D u \in
L^p$, then for each $x \in \R^d$ there exist unit cubes $Q_x$ centred
at $x$ and $Q_0$ centred at $0$, such that
\begin{equation}
  \label{eq:bmo_argument}
  \b| (u)_{Q_x} - (u)_{Q_0} \b| \leq C \| \D u \|_{L^p} \log(2+ |x|).
\end{equation}
The proof of inequality \eqref{eq:bmo_argument} will be given below,
after the end of the proof of this theorem.  With this in hand, we can
define $J_\infty[u](x) := v_0 := (\eta \ast u)(x) - (\eta \ast u)(0)$
to obtain
\begin{align*}
  \b| v_0(x) \b| \leq~& \b| v_0(x) - (v_0)_{Q_x} \b| + \b| (v_0)_{Q_0}
  \b|+ \b| (v_0)_{Q_x}
  - (v_0)_{Q_0} \b| \\
  \leq~& C\, \| \D v_0 \|_{L^\infty} + C\, \| \D v_0  \|_{L^\infty}
  + C \,\| \D v_0 \|_{L^p} \log (2+|x|)
  \\
  \leq~& C\, \| \D u \|_{L^p} \log(2+ |x|),
\end{align*}
for a generic (dimension-dependent) constants $C$.  This concludes the
proof of case (iii).
\end{proof}

\begin{figure}
  \includegraphics[width=8cm]{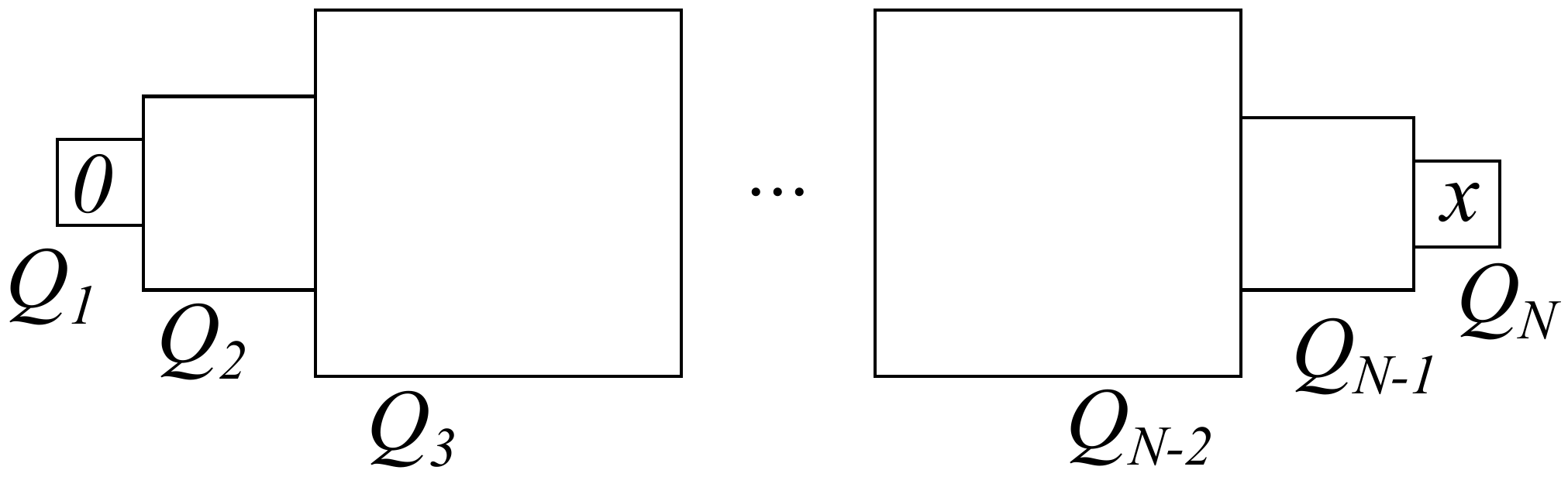}
  \caption{\label{fig:bmo_argument} Visualization of an argument used
    in the proof of \eqref{eq:bmo_argument}.}
\end{figure}

\begin{proof}[Proof of \eqref{eq:bmo_argument}]
  We assume without loss of generality that $|x| \geq 1$.  Let $Q_0,
  Q_x$ be unit cubes centred, respectively, at $0$ and $x$ such that
  one set of edges of each of the cubes $Q_0$ and $Q_x$ is aligned
  with the direction $\vec{0x}$. There
  exists $N \leq C (2+\log|x|)$ and cubes $Q_2, \dots, Q_{N-1}$ with
  the same alignment as $Q_0, Q_x$ and with disjoint interior such
  that, for any two neighbouring cubes, their sidelengths differ by at
  most a factor $2$ and one face of the smaller cube is contained
  within one face of the large cube. See Figure \ref{fig:bmo_argument}
  for a visualization of this argument.

  For any two neighbouring cubes $Q_j, Q_{j+1}$ we have
  \begin{displaymath}
    \b| (u)_{Q_j} - (u)_{Q_{j+1}} \b| \leq C \| \D u \|_{L^p},
  \end{displaymath}
  which is a special case of \cite[Lemma 2]{JN}, but can
  also be verified directly by enclosing $Q_j, Q_{j+1}$ in a larger
  cube of approximately the same size. Hence, defining $Q_x = Q_N$, we
  obtain
  \begin{displaymath}
    \b| (u)_{Q_x} - (u)_{Q_0} \b| \leq \sum_{j = 0}^{N-1} \b|
    (u)_{Q_{j+1}} - (u)_{Q_j} \b| \leq N \| \D u \|_{L^p}. \qedhere
  \end{displaymath}
\end{proof}

\begin{remark}
  The map $J' := J_\infty + J_0$ defines an embedding of $\We{1,p}$
  into $\DD'$. Since $J_0$ is continuous to $W^{1,p}$ and $J_\infty$
  is continuous to $W^{1,\infty}_\loc$ it follows that the embedding
  $J'$ of $\We{1,p}$ into $\DDe$ is in fact continuous. However, it
  is not particularly useful for our purposes since we are explicitly
  interested in operations that are translation invariant, that is,
  independent of the representative $u \in [u]$, whenever $[u] \in
  \We{1,p}$.
\end{remark}

\section{Well-posedness and regularity}
From now on we restrict our presentation to the case $p = 2$ and hence
define $\He{1} := \We{1,2}$. Since we will take particular care that
all operators, linear functionals, and bilinear forms we consider are
translation invariant, we will drop the brackets in $[u] \in \He{1}$
and instead write simply $u \in \He{1}$ instead, by which we mean an
arbitrary representative from the class $[u]$. (For convenience one
may take $u = J_\infty[u] + J_0[u]$.)

Since we consider elliptic systems, we will from now on identify all
function spaces with spaces of vector-valued functions, that is, $L^p
= (L^p)^m$, $\He{1} = (\He{1})^m$, and so forth, for some fixed $m \in
\N$.

Before we embark on the analysis of the elliptic system
\eqref{eq:ellsys_strong} we briefly discuss admissible right-hand
sides $f$ for \eqref{eq:ellsys_strong} as well as the far-field
boundary condition \eqref{eq:farfield_bc_formal}.

\subsection{The dual of $\He{1}$}
\label{sec:dual}
We denote the topological dual of $\He{1}$ by $\He{-1}$. Since
$\He{1}$ is a Hilbert space with inner product $(\D \cdot, \D
\cdot)_{L^2}$ it follows that, for each $\ell \in \He{-1}$, there
exists $F \in L^2$ such that $\ell = -{\rm div} F$ in the
distributional sense. (For a generalisation of this result to
$\We{-1,p}$, $p \in (1,\infty)$ see \cite[Lemma 2.2]{KozSoh:1991}.)

If we wish to define $\ell$ via an $L^2$-pairing, then the following
two examples give concrete conditions:
\begin{enumerate}
\item Let $f \in L^1_\loc$; then we can define $\ell : \DDe \to \R$ by
  \begin{equation}
    \label{eq:linear_fcnl_ex1}
    \ell([u]) := \int_{\R^d} f \cdot u^* \dx, \quad \text{where } u^*
    \in [u], u^* \in \DD.
  \end{equation}
  If, moreover, $f = {\rm div} g$, where $g \in L^2$, then
  \eqref{eq:linear_fcnl_ex1} can be extended to a bounded linear
  functional on $\He{1}$.

\item More concretely, if $f \in L^1$ with $\int_{\R^d} f \dx = 0$ and
  ${\rm div} g = f$, $g \in L^2$, then we may define
  \begin{equation}
    \label{eq:linear_fcnl_ex2}
    \ell([u]) := \int_{\R^d} f \cdot u \dx, \quad \text{for any } u
    \in [u] \in \DDe.
  \end{equation}
  Again, \eqref{eq:linear_fcnl_ex2} can be extended to a bounded
  linear functional on $\He{1}$.

\item Even more concretely, if $f \in L^1 \cap L^\infty$ with
  $\int_{\R^d} f \dx = 0$, and $x \otimes f \in L^1$, then this is
  sufficient to ensure that $\ell$ defined through
  \eqref{eq:linear_fcnl_ex2} can be extended to a bounded linear
  functional on $\He{1}$ (see Lemma \ref{th:linear_fcnl_ex3}
  below). We note, however, that right-hand sides with such strong
  decay assumptions may be more naturally treated within the framework
  of weighted Sobolev spaces \cite{Kufner}.
\end{enumerate}

\begin{lemma}
  \label{th:linear_fcnl_ex3}
  Suppose that $f \in L^1 \cap L^2$ with $\int_{\R^d} f \dx = 0$,
  $f \otimes x \in L^1$,  and let $\ell : \DDe \to \R$ be defined
  through \eqref{eq:linear_fcnl_ex2}; then
  \begin{displaymath}
    \ell(u) \leq C \| \D u \|_{L^2} \qquad \forall u \in \DDe.
  \end{displaymath}
\end{lemma}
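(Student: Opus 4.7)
The plan is to show that $f$ can be written as the divergence of some $g \in L^2$, and then reduce the estimate to Cauchy--Schwarz. If we have $\Div g = f$ with $g \in L^2$, then for any test function representative $u \in \DD$ of $[u] \in \DDe$ we integrate by parts to obtain
\begin{displaymath}
  \ell(u) = \int f \cdot u \dx = -\int g : \D u \dx \leq \|g\|_{L^2} \|\D u\|_{L^2},
\end{displaymath}
which is exactly the desired bound. The cancellation condition $\int f \dx = 0$ is precisely what guarantees that such a $g$ should exist, since $\Div g$ automatically has vanishing mean on any bounded set in a suitable limiting sense; the first-moment condition $f \otimes x \in L^1$ will give us enough regularity near the origin in Fourier space to ensure $g \in L^2$.

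I would construct $g$ via the Fourier transform. Set
\begin{displaymath}
  \widehat{g}(\xi) := -\mathrm{i}\frac{\xi}{|\xi|^2}\, \widehat{f}(\xi),
\end{displaymath}
so that formally $\Div g = f$. To verify $g \in L^2$, by Plancherel I must check
\begin{displaymath}
  \int_{\R^d} \frac{|\widehat{f}(\xi)|^2}{|\xi|^2} \dd \xi < \infty.
\end{displaymath}
For the high-frequency part this is immediate from $f \in L^2$ (so $\widehat f \in L^2$) combined with the decay of $|\xi|^{-2}$. For the low-frequency part, the assumptions $f \in L^1$ and $x \otimes f \in L^1$ ensure that $\widehat f$ is $C^1$ with $\widehat f(0) = 0$ (using $\int f \dx = 0$) and $\nabla \widehat f(0) = -\mathrm{i}\int x f(x) \dx$, giving $|\widehat f(\xi)| \leq C |\xi|$ in a neighbourhood of the origin. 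Hence $|\widehat f(\xi)|^2/|\xi|^2$ is bounded near $0$ and integrable over any bounded neighbourhood of the origin in any dimension $d \geq 1$.

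Once $g \in L^2$ is constructed, I would verify that $\Div g = f$ holds in the distributional sense by testing against Schwartz functions and using the Fourier inversion formula. This identity together with integration by parts yields the displayed estimate for $u \in \DD$. Finally, since $\int f \dx = 0$, the value $\ell(u) = \int f \cdot u \dx$ is unchanged when $u$ is replaced by $u + c$ for any constant $c$, so $\ell$ is a well-defined linear functional on $\DDe$, and the bound $|\ell(u)| \leq \|g\|_{L^2}\|\D u\|_{L^2}$ is exactly of the form $C\|\D u\|_{L^2}$ as required.

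The main obstacle is the low-frequency integrability check; the rest is essentially bookkeeping. This is also the step that makes transparent why the moment condition $f \otimes x \in L^1$ is imposed in addition to $\int f \dx = 0$: the former yields the linear (rather than merely sublinear) vanishing of $\widehat f$ at the origin, which is needed in low dimensions $d \leq 2$ where the weight $|\xi|^{-2}$ is non-integrable near the origin. In higher dimensions a softer argument, based on Sobolev embedding of $\He{1}$ into $L^{2^*}$ together with interpolation of $f \in L^1 \cap L^2$ into $L^{2d/(d+2)}$, would also succeed, but the Fourier construction provides a uniform treatment.
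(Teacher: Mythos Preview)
Your argument is correct and follows essentially the same route as the paper: both proofs use the Fourier transform, observe that $f\in L^1$, $\int f=0$, and $x\otimes f\in L^1$ make $\hat f$ a $C^1$ (equivalently $W^{1,\infty}$) function vanishing at the origin, and conclude that $\hat f(\xi)/|\xi|\in L^2$. The paper stops at this point, leaving the final step implicit, whereas you spell it out by explicitly defining $g$ via $\hat g(\xi)=-i\xi|\xi|^{-2}\hat f(\xi)$ and integrating by parts; this is the same idea made explicit.
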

\begin{proof}
  Consider the Fourier transform of $f$, which is defined in a
  pointwise sense since $f \in L^1$:
  \begin{displaymath}
    \hat{f}(k) = \int f(x) \exp(-i k \cdot x) \dk.
  \end{displaymath}
  Taking the formal derivative with respect to $k$ we obtain
  \begin{displaymath}
    \D \hat{f}(k) = - i \int f(x) \otimes x \exp(-i k \cdot x) \dk.
  \end{displaymath}
  If $f \otimes x \in L^1$ then Lebesgue's differentiation theorem can
  be used to make this rigorous. Hence we deduce that $\hat{f} \in
  W^{1,\infty}$. Therefore, since $\hat{f}(0) = 0$, it follows that
  $\hat{f}(k)/|k|$ is bounded as $k \to 0$. Because $f \in L^2$, it
  follows that $\hat{f} \in L^2 \cap L^\infty$, from which is follows
  easily that $\hat{f}(k)/|k| \in L^2$.
\end{proof}

\subsection{The far-field boundary condition}
In this section we interpret the far-field boundary condition
\eqref{eq:farfield_bc_formal} by showing that the space $\He{1}$ is a
natural ansatz space to make this condition rigorous. A simple
motivation for selecting $\He{1}$ as space of functions in which
a solution to \eqref{eq:ellsys_strong}, \eqref{eq:farfield_bc_formal}
is sought, is that this space can be understood as the closure of $\DDe$ in an
``energy-norm''. However, we can give a finer interpretation
of \eqref{eq:farfield_bc_formal} by employing Theorem~\ref{th:growth}.

Let $m = d$. Suppose that an elastic body occupies the reference
domain $\R^d$. Deformations of $\R^d$ are sufficiently smooth
invertible maps $y : \R^d \to \R^d$. Suppose we apply a far-field
boundary condition
\begin{equation}
  \label{eq:farfield_bc_defm}
  y(x) \sim \mA x  \quad \text{ as } |x| \to \infty
\end{equation}
for some non-singular matrix $\mA \in \R^{d \times d}$, which is
usually understood to mean
\begin{displaymath}
  y(x) = \mA x + o(|x|), \quad \text{or,
    equivalently} \quad \frac{|y(x) - \mA x|}{|x|} \to 0 \qquad
  \text{as } |x| \to \infty.
\end{displaymath}

Suppose now that we decompose $y(x) = \mA x + u(x)$; then, the
far-field boundary condition \eqref{eq:farfield_bc_defm} for the
deformation, written in terms of the displacement $u$, becomes
\begin{equation}
  \label{eq:farfield_bc_u_v2}
  u(x) = o(|x|), \quad \text{or, equivalently,} \quad
  \frac{|u(x)|}{|x|} \to 0
  \qquad \text{as } |x| \to \infty.
\end{equation}

While the pointwise condition \eqref{eq:farfield_bc_u_v2} cannot
be satisfied for classes of Sobolev functions,  Theorem
\ref{th:growth} indicates that \eqref{eq:farfield_bc_u_v2}
is satisfied ``on average'' for the representative $J_\infty[u] +
J_0[u]$, for all $[u] \in \He{1}$. Hence it is reasonable to take
$\He{1}$ as the function space in which a solution to
\eqref{eq:ellsys_strong} is sought subject to the far-field displacement
boundary condition \eqref{eq:farfield_bc_formal}.

\subsection{Weak form and well-posedness}
Let $\tC = (\tC_{i\alpha}^{j\beta})_{i,j = 1, \dots, m}^{\alpha, \beta
  = 1, \dots, d} \in (L^\infty)^{m^2d^2}$; we then define the
symmetric bilinear form $a : \He{1} \times \He{1} \to \R$,
\begin{displaymath}
  a(u, v) := \int \tC_{i\alpha}^{j\beta} \D_\alpha u_i \D_\beta
  u_j \dx;
\end{displaymath}
where, here and throughout, we employ the summation
convention. Clearly, $a$ is bounded,
\begin{displaymath}
  a(u, v) \leq c_1 \| \D u \|_{L^2} \| \D v \|_{L^2} \qquad
  \forall u, v \in \He{1},
\end{displaymath}
where $c_1 = \|\tC\|_{L^\infty}$, hence we can pose
\eqref{eq:ellsys_strong}, \eqref{eq:farfield_bc_formal} in weak form:
\begin{equation}
  \label{eq:weak_form}
  a(u, v) = \ell(v) \qquad \forall v \in \DD(\R^d; \R^m),
\end{equation}
where $\ell$ is of the form of Example 1 discussed in
\S~\ref{sec:dual}. An application of the Lax--Milgram theorem gives
the following result.

\begin{theorem}
  \label{th:wellposedness}
  Suppose that $a$ is also coercive:
  \begin{equation}
    \label{eq:coercivity}
    a(u, u) \geq c_0 \| \D u \|_{L^2}^2 \qquad \forall u \in
    \He{1}(\R^d; \R^m),
  \end{equation}
  for some constant $c_0 > 0$; then, \eqref{eq:weak_form} possesses a
  unique solution.
\end{theorem}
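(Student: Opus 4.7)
The plan is to apply the Lax--Milgram theorem on the Hilbert space $\He{1}$, with the main preparation being to verify that all objects are well-defined on equivalence classes and that the weak form over $\DD$ is equivalent to the weak form over $\He{1}$.

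First I would check that $\He{1}$ is a \emph{Hilbert} space. Completeness is the earlier Proposition (applied with $p=2$), and the inner product
\[
  (u,v)_{\He{1}} := \int \D u : \D v \dx
\]
is well-defined on the quotient because both sides only see the gradient, which is invariant under the addition of a constant. This endows $\He{1}$ with a Hilbert structure whose induced norm is precisely $\|[u]\|_{\He{1}} = \|\D u\|_{L^2}$.

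Next I would verify that $a$ descends to $\He{1} \times \He{1}$ and satisfies the Lax--Milgram hypotheses. Translation invariance is immediate since $a$ depends only on $\D u$ and $\D v$; boundedness is given by $c_1 = \|\tC\|_{L^\infty}$; coercivity is the explicit assumption \eqref{eq:coercivity}. The right-hand side $\ell$, being of the form of Example~1 in \S\ref{sec:dual}, extends to an element of $\He{-1}$, so $\ell$ is a bounded linear functional on $\He{1}$.

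With all hypotheses in place, the Lax--Milgram theorem produces a unique $u \in \He{1}$ such that
\[
  a(u,v) = \ell(v) \qquad \forall\, v \in \He{1}.
\]
To close the loop with \eqref{eq:weak_form}, I would use the density theorem: any $v \in \He{1}$ is the limit in $\|\D \cdot\|_{L^2}$ of a sequence $([v_n]) \subset \DDe$, and by continuity of both $a(u,\cdot)$ and $\ell$ on $\He{1}$ (noting that both are translation-invariant in the test function, so they are well-defined on $\DDe$ and agree with their evaluation on any representative in $\DD$), the identity $a(u,v_n) = \ell(v_n)$ passes to the limit. Conversely, any solution of \eqref{eq:weak_form} extends by the same density and continuity to a solution on $\He{1}$, so Lax--Milgram's uniqueness yields uniqueness for \eqref{eq:weak_form}.

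The main obstacle is bookkeeping rather than analysis: one must consistently track that $a$ and $\ell$ are functionals on the quotient $\He{1}$ (so that Lax--Milgram applies at all), while the weak formulation \eqref{eq:weak_form} is phrased with test functions $v \in \DD$ rather than $\DDe$. The density theorem (valid since $p = 2 > 1$) together with the translation invariance of both $a$ and $\ell$ in the test slot is exactly what bridges these two viewpoints.
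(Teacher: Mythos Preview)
Your proposal is correct and follows exactly the route the paper takes: the paper's entire proof is the single sentence ``An application of the Lax--Milgram theorem gives the following result.'' You have simply spelled out the verification of the Lax--Milgram hypotheses and the density argument that the paper leaves implicit.
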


\medskip We present three elementary examples of coercivity
\eqref{eq:coercivity}:
\begin{enumerate}
\item If $m = 1$ and $\mC := (\tC_\alpha^\beta)_{\alpha, \beta = 1,
    \dots, d}$ is uniformly positive definite, i.e., $k^{\rm T} \mC(x) k
  \geq c_0 |k|^2$ for a.e.  $x \in \R^d$ and for all $k \in \R^d$,
  then \eqref{eq:coercivity} holds.

\item If $m \in \N$, $\tC$ is a constant tensor and satisfies the
  Legendre--Hadamard condition
\begin{equation}
  \label{eq:LH}
  \tC_{i\alpha}^{j\beta} v_i v_j k_\alpha k_\beta \geq c_0 |v|^2
  |k|^2 \qquad \forall v \in \R^m, \quad k \in \R^d,
\end{equation}
then \eqref{eq:coercivity} holds.

This result is classical if $u \in \DD$. Since $a$ is translation
invariant it follows that it also holds for all $u \in \DDe$. Since
$a$ is bounded and $\DDe$ is dense in $\He{1}$ coercivity holds also
in the full space $\He{1}$.

\item Let $\bar{\tC} \in \R^{d^2m^2}$ be a constant tensor satisfying
  \eqref{eq:LH} with $c_0 = \bar{c}_0$; then, \eqref{eq:coercivity}
  holds with $c_0 = {\bar{c}}_0 - c_1 \| \bar{\tC} - \tC \|_{L^\infty}$.
\end{enumerate}

\begin{remark}
  One may give more general conditions for coercivity of $a$ (or inf-sup conditions) 
  based on G\aa rding's inequality and conditions on the $L^2$-spectrum of $a$.
\end{remark}

\subsection{Regularity}
Higher regularity of the right-hand side leads to higher
regularity of the solution to \eqref{eq:weak_form}. For $s \in \{2, 3,
\dots\}$ we define
\begin{displaymath}
  \He{s} := \b\{ u \in \He{1} \bsep \D u \in H^{s-1} \b\}.
\end{displaymath}
In the following theorem we present conditions for $\He{2}$ and
$\He{3}$ regularity. Regularity in $\He{s}$ for $s \geq 4$ can be
established similarly.

\begin{theorem}
  \label{th:regularity}
  Let all conditions of Theorem \ref{th:wellposedness} be satisfied,
  and let $u$ denote the unique solution to \eqref{eq:weak_form}.
  \begin{enumerate}\ilist
  \item Suppose, in addition, that $\tC \in C^{1}$ and $f \in L^2$;
    then, $u \in \He{2}$ and
    \begin{displaymath}
      \| \D^2 u \|_{L^2} \leq C \b( \| f \|_{L^2} + c_2 \| \D \tC
      \|_{L^\infty} \b).
    \end{displaymath}
  \item Suppose, in addition, that $\tC \in C^2$, $\D\tC \in L^2$, and
    $f \in H^1$; then, $u \in \He{3}$ and
    \begin{displaymath}
      \| \D^3 u \|_{L^2} \leq C \b( \| \D f \|_{L^2} +
      \| \D \tC \|_{L^2} \| \D^2 u \|_{L^2}
      + c_2 \| \D^2 \tC \|_{L^\infty} \b).
    \end{displaymath}
  \end{enumerate}
\end{theorem}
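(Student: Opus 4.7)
The approach is Nirenberg's difference-quotient method; because the problem lives on all of $\R^d$ there is no boundary to localize against, so the argument is genuinely global and no cut-offs are needed. For a unit vector $e\in\R^d$ and $h\neq 0$, set $\tau_h v(x):=v(x+he)$ and $D_h v:=(\tau_h v - v)/h$.

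For part (i), I first derive the weak equation satisfied by $D_h u$: translating $a(u,\phi)=\int f\cdot\phi\dx$ by $he$, subtracting from the original identity, and dividing by $h$ yields, for every $\phi\in\DD$,
\begin{equation*}
  a(D_h u,\phi) = \int D_h f\cdot \phi\dx - \int (D_h \tC):\D\tau_h u : \D\phi\dx.
\end{equation*}
Since $\D D_h u = D_h \D u \in L^2$, the class $[D_h u]$ lies in $\He{1}$, and by density of $\DDe$ the identity extends to $\phi\in\He{1}$. Testing with $\phi=D_h u$ and invoking coercivity gives $c_0\|\D D_h u\|_{L^2}^2\le a(D_h u,D_h u)$. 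The coefficient term on the right is bounded by $\|\D\tC\|_{L^\infty}\|\D u\|_{L^2}\|\D D_h u\|_{L^2}$, using $\|D_h\tC\|_{L^\infty}\le\|\D\tC\|_{L^\infty}$. For the load term I first verify on $\phi\in\DD$ the identity $\int D_h f\cdot\phi\dx = -\int f\cdot D_{-h}\phi\dx$, from which $|\int D_h f\cdot\phi\dx|\le\|f\|_{L^2}\|D_{-h}\phi\|_{L^2}\le\|f\|_{L^2}\|\D\phi\|_{L^2}$ by the standard Nirenberg bound $\|D_{-h}\phi\|_{L^2}\le\|\D\phi\|_{L^2}$; the right-hand side depends only on the $\He{1}$-seminorm and so extends to $\phi\in\He{1}$. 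Absorbing one power of $\|\D D_h u\|_{L^2}$, I obtain a bound uniform in $h$, and the weak-compactness form of Nirenberg's lemma yields $\D^2 u\in L^2$ with the claimed estimate.

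For part (ii), part (i) gives $u\in\He{2}$, so I may legitimately differentiate the weak form: testing the original equation against $-\D_k\psi$ ($\psi\in\DD$) and integrating by parts shows that $w:=\D_k u\in\He{1}$ satisfies $a(w,\psi)=\int\D_k f\cdot\psi\dx - \int(\D_k\tC):\D u:\D\psi\dx$ for all $\psi\in\DD$. I then apply the difference-quotient machinery of part (i) to this equation in a direction $e_l$; using the discrete Leibniz rule $D_h(AB)=(D_h A)B+\tau_h A(D_h B)$ on the product $\D_k\tC:\D u$, the resulting equation for $D_h^l w=\D_k D_h^l u$ has four right-hand-side contributions. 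Testing with $D_h^l w\in\He{1}$ and using coercivity, these are controlled respectively by $\|\D f\|_{L^2}\|\D D_h^l w\|_{L^2}$ (load term, via the same $D_{-h}$ trick), $\|\D^2\tC\|_{L^\infty}\|\D u\|_{L^2}\|\D D_h^l w\|_{L^2}$, $\|\D\tC\|_{L^2}\|\D^2 u\|_{L^2}\|\D D_h^l w\|_{L^2}$ (the mixed term, by H\"older and the $L^2$-hypothesis on $\D\tC$ combined with the $L^2$-bound on $\D^2 u$ from part (i)), and an analogous bound for the coefficient-perturbation contribution. Absorbing and sending $h\to 0$ delivers $\D^3 u\in L^2$ with the stated estimate.

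The main obstacle is the mixed term in part (ii): closing the estimate in the form stated, with the $L^2$-norm of $\D\tC$ rather than the easier $L^\infty$-norm, requires a careful splitting of H\"older exponents together with the $L^2$-control of $\D^2 u$ already established in part (i). Everything else reduces to iterating the method of part (i), which works cleanly in the equivalence-class setting because the key bounds depend only on the $\He{1}$-seminorm and so are insensitive to the choice of representative.
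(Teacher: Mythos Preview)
Your approach is essentially the same as the paper's: both rely on the Nirenberg difference-quotient/derivative-testing method. The paper organises it slightly differently, first quoting the finite-difference technique from Evans to obtain $\D u\in H^{s+1}_{\rm loc}$, and then deriving the global estimate by testing \eqref{eq:weak_form} with $v'=\D_\gamma v$ (respectively $v'=\D_\gamma\D_\delta v$ for part (ii)) for $v\in\DD$ and integrating by parts; you combine both steps by working directly with global difference quotients. Your treatment is in fact a bit more careful about the equivalence-class setting (checking that $[D_h u]\in\He{1}$, extending by density, handling the load term via $D_{-h}$), which the paper leaves implicit.

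One remark on the mixed term in (ii): your plan to bound $\int(\D_k\tC):D_h^l\D u:\D\psi$ by $\|\D\tC\|_{L^2}\|\D^2 u\|_{L^2}\|\D\psi\|_{L^2}$ does not close as a straight H\"older inequality, since the exponents $2,2,2$ do not sum correctly. The paper's proof of (ii) is just the single line ``test with $v'=\D_\gamma\D_\delta v$ and perform a similar calculation'', which in fact naturally produces the bound $\|\D\tC\|_{L^\infty}\|\D^2 u\|_{L^2}$ rather than the $L^2$--$L^2$ product appearing in the stated estimate; so the difficulty you flag is a feature of the theorem's displayed inequality rather than a defect of your method.
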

\begin{proof}
  Let $u_* := J_\infty[u] + J_0[u]$ denote a concrete representative
  of the solution $[u] \in \He{1}$ of the weak form
  \eqref{eq:weak_form}. Then clearly $u_*$ satisfies
  \eqref{eq:weak_form}.  The finite difference technique \cite[Section
  6.3.1]{evans} ensures that $u_* \in H^{s+2}_\loc$, and in particular
  $\D u_* \in H^{s+1}_\loc$. The latter property is independent of the
  representative; hence we may say that $\D u \in H^{s+1}_\loc$.

  To obtain the global bound {\it (i)}, we test \eqref{eq:weak_form}
  with $v' = \D_\gamma v$ for some $v \in \DD$, $\gamma \in \{1,\dots,
  d\}$. Then,
  \begin{align*}
    \int f \cdot \D_\gamma v \dx =~& \int \tC_{i\alpha}^{j\beta}
    \D_\alpha u_i \D_\beta \D_\gamma v_j \dx \\
    =~& - \int \B( \D_\gamma \tC_{i\alpha}^{j\beta} \D_\alpha u_i +
    \tC_{i\alpha}^{j\beta} \D_\alpha \D_\gamma u_i\B) \D_\beta v_j \dx,
  \end{align*}
  which implies that
  \begin{displaymath}
    c_0 \| \D(\D_\gamma u) \|_{L^2} \leq \| f \|_{L^2} + \| \D \tC
    \|_{L^\infty} \| \D u \|_{L^2} \leq \| f \|_{L^2} + c_2 \| \D \tC
    \|_{L^\infty}.
  \end{displaymath}

  To prove {\it (ii)} we test with $v' = \D_\gamma \D_\delta v$ and
  perform a similar calculation.
\end{proof}

\bibliographystyle{siam}
\bibliography{biblio}

\begin{thebibliography}{1}

\bibitem{DL}
{\sc J.~Deny and J.-L. Lions}, {\em Les espaces du type de {B}eppo {L}evi},
  Annales de l'institut Fourier, 5 (1954), pp.~305--370.

\bibitem{evans}
{\sc L.~C. Evans}, {\em Partial Differential Equations}, American Mathematical
  Society, 1998.

\bibitem{GalSim:1990}
{\sc G.~P. Galdi and C.~G. Simader}, {\em Existence, uniqueness and
  {$L^q$}-estimates for the {S}tokes problem in an exterior domain}, Arch.
  Rational Mech. Anal., 112 (1990), pp.~291--318.

\bibitem{JN}
{\sc F.~John and L.~Nirenberg}, {\em On functions of bounded mean oscillation},
  Communications on Pure and Applied Mathematics, 14 (1961), pp.~415--426.

\bibitem{KozSoh:1991}
{\sc H.~Kozono and H.~Sohr}, {\em New a priori estimates for the {S}tokes
  equation in exterior domains}, Indiana University Mathematics Journal, 40
  (1991).

\bibitem{Kufner}
{\sc A.~Kufner}, {\em Weighted Sobolev Spaces}, Teubner-Texte zur Mathematik,
  Teubner, Stuttgart, 1980.

\bibitem{SamVar:2003}
{\sc T.~Samrowski and W.~Varnhorn}, {\em The {P}oisson equation in homogeneous
  {S}obolev spaces}, International Journal of Mathematics and Mathematical
  Sciences, 36 (2004), pp.~1909--1921.

\bibitem{Sohr}
{\sc H.~Sohr and M.~Specovius-Neugebauer}, {\em The {S}tokes problem on
  exterior domains in {H}omogeneous sobolev spaces}, in Theory of the
  Navier--Stokes Equation, J.~G. Heywood, K.~Masuda, R.~Rautmann, and V.~A.
  Solonnikov, eds., vol.~47 of Series on Advances in Mathematics for Applied
  Sciences, World Scientific, 1998.

\bibitem{WaJaBe:1985}
{\sc W.~Walter, R.~Jansen, and V.~Besov}, {\em The {D}irichlet problem for
  second order elliptic operators on unbounded domains}, Applicable Analysis,
  19 (1985).

\end{thebibliography}

\end{document}